\newtheorem{theorem}{Theorem}[section]
\newtheorem{lemma}[theorem]{Lemma}
\newtheorem{corollary}[theorem]{Corollary}
\newtheorem{proposition}[theorem]{Proposition}
\newtheorem{sublemma}{}[theorem]
\theoremstyle{definition}
\theoremstyle{remark}
\numberwithin{equation}{section}
\newcommand*\comp[1]{\overline{#1}}
\newcommand{\squid}{F_1}
\newcommand{\lobster}{F_2}
\newcommand{\starfish}{F_3}
\newcommand{\virus}{F_4}
\newcommand{\rocket}{F_5}
\newcommand{\prism}{F_6}
\def\old@comma{,}
    \old@comma\discretionary{}{}{}%
\begin{document}

\title[3-Regular Permutation Graphs]{Characterization and Enumeration of 3-Regular Permutation Graphs}

\author{Aysel Erey}
\address{Department of Mathematics\\
Gebze Technical University\\
Kocaeli, Turkey}
\email{aysel.erey@gtu.edu.tr}

\author{Zachary Gershkoff}
\address{Department of Mathematics\\
Louisiana State University\\
Baton Rouge, Louisiana}
\email{zgersh2@math.lsu.edu}

\author{Amanda Lohss}
\address{Department of Mathematics, Physics, and Statistics\\
Messiah College\\
Mechanicsburg, Pennsylvania}
\email{alohss@messiah.edu}

\author{Ranjan Rohatgi}
\address{Department of Mathematics and Computer Science\\
Saint Mary's College\\
Notre Dame, Indiana}
\email{rrohatgi@saintmarys.edu}

\subjclass{05C75}
\date{\today}

\begin{abstract}
A permutation graph is a graph that can be derived from a permutation, where the vertices correspond to letters of the permutation, and the edges represent inversions. We provide a construction to show that there are infinitely many connected $r$-regular permutation graphs for $r \geq 3$. We prove that all $3$-regular permutation graphs arise from a similar construction. Finally, we enumerate all $3$-regular permutation graphs on $n$ vertices.
\end{abstract}

\keywords{permutation graph, graph regularity}

\maketitle

\section{Introduction}
\label{Introduction}

The graphs considered here are finite and simple. A graph on $n$ vertices is a {\it permutation graph} if there is a labeling $v_1, v_2, \ldots, v_n$ of the vertices, and a permutation $\pi = [\pi(1), \pi(2), \ldots,  \pi(n)]$, such that $v_i$ and $v_j$ are adjacent in $G$ if and only if $i < j$ and $\pi(i) > \pi(j)$. In this case, the ordered pair $(\pi(i), \pi(j))$ is said to be an {\it inversion} of $\pi$. 
This definition of permutation graphs was given in 1971 by Pneuli et al. \cite{ple}. We note that this is different from the ``generalized prisms'' \cite{Yi} notion of permutation graphs given by Chartrand and Harary \cite{ch}.

Permutation graphs have received a considerable amount of attention in the literature since their introduction (see, for example, \cite{kls, ru, js}). Many algorithmic problems have efficient solutions on permutation graphs. For example, it was shown in \cite{CHP} that the longest path problem (which is NP-complete on general graphs) can be solved in linear time on permutation graphs.


There has been interest in enumerating various types of permutation graphs. For instance, in \cite{kr2}, Koh and Ree gave a recurrence relation for the number of connected permutation graphs. In \cite{AH}, the number of permutation trees is shown to be $2n-2$ for $n\geq 2$. A graph is called {\it $r$-regular} if the degree of the each vertex of the graph is equal to $r$ . It is easy to see that the only connected $2$-regular permutation graphs are $C_3$ and $C_4$ \cite{kr}, as it is well known that permutation graphs cannot have induced cycles of length five or greater. In this direction, we will consider $r$-regular permutation graphs with $r > 2$, we show that the family is infinite for each $r>2$.

\begin{theorem}\label{construction}
For every $r \geq 3$, there are infinitely many connected $r$-regular permutation graphs.
\end{theorem}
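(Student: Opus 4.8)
The plan is to exhibit, for each fixed $r \geq 3$ and each sufficiently large $k$, an explicit permutation $\pi_{r,k}$ on $n_k$ letters with $n_k \to \infty$, and to let $G_{r,k}$ be its permutation graph. The decisive simplification is that, because $G_{r,k}$ is \emph{defined} as the permutation graph of an actual permutation, it is automatically a permutation graph; in particular I never have to verify any forbidden induced subgraph condition (e.g.\ the absence of induced $C_{\geq 5}$). Thus the theorem reduces to producing permutations whose graphs are (i) $r$-regular, (ii) connected, and (iii) of unbounded order. Since graphs of different orders cannot be isomorphic, (iii) is immediate once the $n_k$ are strictly increasing, so the real content is (i) and (ii).

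To design $\pi_{r,k}$ I would work in the standard point model: record the letters of $\pi$ as points $(i,\pi(i))$ in the plane, so that two vertices are adjacent exactly when one point lies strictly to the upper-left of the other (that is, when they form an inversion), and the degree of a vertex is the number of points lying in its upper-left quadrant together with those in its lower-right quadrant. Reading $\pi$ from left to right I would lay the points down in consecutive \emph{groups} along the diagonal, arranged so that each group crosses only its neighboring groups; the graph then looks like a long path of locally interacting blocks, which is what forces connectivity and keeps each interior vertex at a bounded, controllable degree. I would then tune the group sizes and their relative vertical offsets so that every point of an interior group participates in exactly $r$ inversions, splitting the count into the inversions it makes within its own group and with the neighboring groups; once the periodic pattern is fixed, this is a direct per-position computation.

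The main obstacle is the boundary. A purely periodic pattern leaves the first and last few points deficient, since they lack a neighboring group on one side, and one cannot repair this by wrapping the pattern into a cycle: a cyclic (blow-up-of-a-long-cycle) arrangement contains an induced long cycle and hence is never a permutation graph. So I must design two special end segments that raise the boundary vertices back up to degree $r$ using only local crossings, without disturbing the degree of any interior vertex. I expect this \emph{capping} to be the crux, the one place where the construction has to be genuinely clever, and presumably the source of the finitely many sporadic graphs appearing later in the paper. With the caps in place I would verify (i) by the per-position inversion count (interior positions handled uniformly, the $O(r)$ boundary positions by inspection), verify (ii) by checking that consecutive groups are always joined by at least one inversion so that the groups form a connected path spanning $G_{r,k}$, and conclude (iii) from $n_k \to \infty$. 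Specializing the same scheme to $r = 3$ should then yield precisely the infinite family needed for the companion classification.
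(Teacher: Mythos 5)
Your strategy is in essence the same as the paper's: build the graph as a chain of consecutive blocks, each interacting only with its neighboring blocks (in the paper's language, a blow-up of a path, with each block a monotone run of the permutation), together with two special end blocks that repair the boundary degrees. The parts of the argument you do carry out --- that the graph is a permutation graph by fiat, that connectivity follows from consecutive blocks crossing, and that unbounded order yields infinitely many pairwise non-isomorphic graphs --- are correct, but they are the trivial parts.

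The genuine gap is that the construction itself is never produced. Everything constituting the mathematical content of the theorem is deferred: you say you ``would tune the group sizes and their relative vertical offsets'' so that interior vertices get degree exactly $r$, and the end caps --- which you yourself identify as the crux --- are left as something you expect to be able to design. A proof must exhibit the periodic pattern and the caps and verify the degree count, since it is not evident beforehand that such a pattern exists for every $r$. For comparison, the paper's proof consists of exactly this data: take the path $P_{4n+2}$, replace its first vertex by $K_2$ and its last by $K_{r-1}$, and replace interior vertices, according to position modulo $4$, by $I_{r-1}$, $I_{r-2}$, $I_1$, $I_2$; regularity is then a one-line check per block type (for instance, a vertex in an $I_{r-1}$ block has $2$ neighbors in the preceding block and $r-2$ in the following one). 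So the caps are simply $K_2$ and $K_{r-1}$ and the interior pattern is a period-four sequence of empty blocks --- not deep, but precisely what is absent from your writeup; without it the proposal is a plan for a proof rather than a proof.
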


We give a complete characterization of $3$-regular permutation graphs in Section~\ref{Characterization}. This will be given in terms of the construction mentioned above.

An interesting corollary of our construction is that almost all $3$-regular permutation graphs are planar. The family of permutation graphs is closed under induced subgraphs (see, for example, \cite{cp}), but a description in terms of minors, as planarity results are normally stated, is not tractable since permutation graphs are not closed under subgraphs.

\begin{corollary}\label{planar}
Every $3$-regular permutation graph except $K_{3,3}$ is planar.
\end{corollary}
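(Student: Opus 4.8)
The plan is to prove Corollary~\ref{planar} by leveraging the structural characterization of $3$-regular permutation graphs promised in Theorem~\ref{construction} and the surrounding discussion. Since the corollary asserts that every such graph \emph{except} $K_{3,3}$ is planar, the natural strategy is to identify exactly where $K_{3,3}$ can appear and to show it is the only obstruction. Because $K_{3,3}$ and $K_5$ are the two Kuratowski obstructions to planarity, and $K_5$ is $4$-regular (hence cannot be an induced subgraph of a $3$-regular graph in any useful way), the essential point is to rule out $K_{3,3}$ minors except in the one exceptional graph.

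First I would invoke the characterization: every $3$-regular permutation graph is built from the explicit construction described before the corollary. So I would write down the building block of that construction and observe that, apart from finitely many small sporadic cases, the graph decomposes into a ``ladder-like'' or repeated-gadget structure along the permutation. The key structural fact I would extract is that the construction admits a planar embedding in which successive gadgets are stacked in a strip (a boxcar/prism-like layout, consistent with the paper's \ayselgraph terminology), so that the infinite family is manifestly planar by exhibiting the embedding gadget-by-gadget and gluing consecutive gadgets along their shared boundary edges.

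The main steps, in order, are: (1) recall the construction and list the small $3$-regular permutation graphs not covered by the generic construction (the finitely many sporadic cases, presumably including $K_{3,3}$, $K_4$, the prism $\prism$, etc.); (2) check planarity of each sporadic case by hand, finding that all are planar except $K_{3,3}$; (3) for the generic infinite family, produce an explicit planar drawing by giving a planar embedding of the single repeating gadget together with a rule for attaching the next gadget that preserves planarity (i.e. the shared interface vertices lie on a common face); and (4) conclude by induction on the number of gadgets that the whole graph is planar, so every $3$-regular permutation graph other than $K_{3,3}$ is planar.

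I expect the main obstacle to be step~(3): verifying that the gluing of consecutive gadgets genuinely preserves planarity, i.e.\ that the attachment edges can always be routed in the outer face of the partial embedding without crossings. This requires a careful choice of which face each gadget contributes to the ``active boundary'' and an argument that the interface vertices remain co-facial after each attachment. A secondary subtlety is ensuring the enumeration of sporadic cases is exhaustive, so that $K_{3,3}$ is provably the \emph{only} nonplanar graph; this hinges on the completeness of the characterization established via Theorem~\ref{construction}. Once the embedding rule is pinned down, the inductive planarity argument should be routine.
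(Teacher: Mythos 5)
Your proposal takes essentially the same route as the paper: the paper derives Corollary~\ref{planar} as an immediate consequence of Corollary~\ref{characterization} (every connected $3$-regular permutation graph is $K_4$, $K_{3,3}$, or a boxcar graph), remarking only that every boxcar graph has a planar embedding. The gluing step you flag as the main obstacle is simpler than you fear, since consecutive gadgets in a boxcar graph are identified at a \emph{single} vertex (not along shared edges), so planarity of the whole graph follows at once from planarity of each gadget.
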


Finally, we use the characterization of $3$-regular permutation graphs to enumerate them with a generating function.

\begin{theorem}\label{enumeration}
Let $a(n)$ be the number of $3$-regular permutation graphs on $n$ vertices, and let $A(x)$ be the function

\[
 \frac{1}{2} \Big( \frac{1}{1 - x^2 - x^3} + \frac{1 + x^2 + x^3}{1 - x^4 - x^6} \Big).
\]

\begin{enumerate}[(i)]

\item If $n \in \{4,6\}$, then $a(n) = 1$;
\item If $m = \frac{n-10}{2}$ is a positive integer, then $a(n)$ is given by the coefficient of $x^m$ in the expansion of $A(x)$;

\item Otherwise $a(n) = 0$.
\end{enumerate}
\end{theorem}


Proofs of Theorem~\ref{construction}, Corollary~\ref{planar}, and Theorem~\ref{enumeration} can be found in Sections \ref{Construction}, \ref{Characterization}, and \ref{Enumeration}, respectively.

\section{preliminaries}
\label{preliminaries}

If $G$ is a permutation graph with corresponding permutation $\pi$, we say that $\pi$ is a ${\it realizer}$ of $G$. When discussing a realizer and its graph, we will sometimes refer to a vertex in the graph and an entry in the permutation with the same label. It is well known (for example, in \cite{ple}) that $G$ is a permutation graph if and only if its complement $\comp{G}$ is also a permutation graph.

There are many known characterizations of permutation graphs. Recent characterizations include one by Gervacio et al. \cite{grr} in terms of cohesive vertex-set orders, and one by Limouzy \cite{vl} in terms of Seidel minors. Here we rely on the 1967 characterization by Gallai \cite{tg} in terms of forbidden induced subgraphs (see also \cite{isgci,ar}). All cycle graphs on five or more vertices are forbidden induced subgraphs.
We will refer to these as {\it large holes}.
Table~\ref{Forbid} illustrates all other forbidden induced subgraphs with maximum degree $3$.

\begin{table}[htb] \caption{Forbidden induced subgraphs for permutation graphs with $\Delta \leq 3$} 
	\centering      
	\begin{tabular}{ >{\centering\arraybackslash}m{1.4in}  >{\centering\arraybackslash}m{1.4in}  >{\centering\arraybackslash}m{1.4in} }  
		&&\\
\includegraphics[scale=0.45]{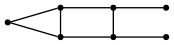} \caption*{$\squid$}& 	\includegraphics[scale=0.4]{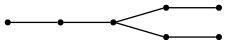} \caption*{$\lobster$}&
\includegraphics[scale=0.45]{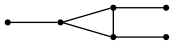} \caption*{$\starfish$}\\

\includegraphics[scale=0.45]{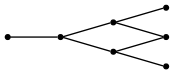} \caption*{$\virus$}&
	\includegraphics[scale=0.45]{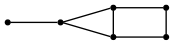} \caption*{$\rocket$}& 	\includegraphics[scale=0.45]{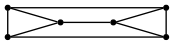} \caption*{$\prism$}\\        
	\end{tabular} \label{Forbid}  
\end{table} 

Throughout this paper, we use $K_i$ and $I_i$ to denote the complete graph on $i$ vertices and the empty graph on $i$ vertices, respectively. We also use $\oplus$ to denote graph disjoint union, and $\Box$ to denote a Cartesian product of graphs. A cycle of length $3$ is referred to as a {\em triangle}, and a cycle of length $4$ is referred to as a {\em square}, regardless of whether or not the cycle is induced.

\section{Infinitely many \texorpdfstring{$r$}{r}-regular permutation graphs for \texorpdfstring{$r\geq 3$}{r > 3}}
\label{Construction}

Let $G$ be a graph of order $n$ with vertices $v_1, v_2, \ldots, v_n$. Given $n$ graphs $H_1, H_2, \dots ,H_n$, we define the {\it composition of $H_1, H_2, \dots ,H_n$ into $G$}, denoted $G[H_1, H_2, \dots ,H_n]$, as the graph which is obtained from $G$ by replacing the vertex $v_i$ with the graph $H_i$. More precisely, the vertex set of $G[H_1, H_2, \dots ,H_n]$ is the disjoint union of the vertex sets of every $H_i$, and $uv$ is an edge of $G[H_1, H_2, \dots ,H_n]$ if and only if either $uv\in E(H_i)$ for some $i$, or there are distinct indices $i$ and $j$ such that $u\in V(H_i)$, $v\in V(H_j) $ and $v_iv_j \in E(G)$. If each graph $H_i$ is a complete graph or empty graph then $G[H_1, H_2, \dots ,H_n]$ is called a {\it blow-up} of $G$, and we say that vertex $v_i$ is {\it blown up} into $H_i$, or {\it replaced} with $H_i$. Notice that $G=G[I_1,_I2,\ldots,I_n]$ and we call this the {\it trivial blow-up} of $G$. We will use {\it blow-up} to mean non-trivial blow-up for the rest of this paper.

\begin{lemma}\label{complemma} \cite[Theorem 3.3]{grr}
Let $G$ be a graph of order $n$ and $H_1, H_2, \, \dots , H_n$ be arbitrary graphs. Then $G^* = G[H_1, H_2, \dots , H_n]$ is a permutation graph if and only if $G$ and each of $H_1, H_2, \, \dots , H_n$ are permutation graphs.
\end{lemma}

    


Using the above lemma, we prove that there are infinitely many connected $r$-regular permutation graphs for every $r \geq 3$.

\begin{proof}[Proof of Theorem~\ref{construction}]
Let $r\geq 3$. For every $n\geq 0$, we construct an $r$-regular permutation graph $G_n$ of order $2nr+r+1$ by taking a blow-up of a path. Let $m = 4n + 2$ and take a path graph $P_m$ with vertices $v_1,v_2,\dots , v_m$ in standard order. Note that $P_m$ is a permutation graph because its maximum degree is $2$ and it does not have an induced subgraph from Table~\ref{Forbid}. Replace the first vertex $v_1$ with $K_2$ and the last vertex $v_m$ with $K_{r-1}$.
For vertices $v_i$ with $i \equiv 2 \pmod 4$, replace them with $I_{r-1}$;
with $i \equiv 3 \pmod 4$, replace $v_i$ with $I_{r-2}$;
with $i \equiv 0 \pmod 4$, replace $v_i$ with $I_1$;
and for $i \equiv 1 \pmod 4$, replace $v_i$ with $I_2$.
The resulting graph $G_n$ is $r$-regular, and since complete graphs and empty graphs are permutation graphs, by Lemma~\ref{complemma}, $G_n$ is a permutation graph. Hence, we obtain an infinite list of $r$-regular permutation graphs

\begin{eqnarray*}
& G_0=P_2[K_2, K_{r-1}] & \\
& G_1=P_6[K_2,I_{r-1},I_{r-2},I_1,I_2,K_{r-1}] & \\
& G_2=P_{10}[K_2,I_{r-1},I_{r-2},I_1,I_2,I_{r-1},I_{r-2},I_1,I_2,K_{r-1}] & \\
& G_3=P_{14}[K_2,I_{r-1},I_{r-2},I_1,I_2,I_{r-1},I_{r-2},I_1,I_2,I_{r-1},I_{r-2},I_1,I_2,K_{r-1}] & \\ & \vdots &
\end{eqnarray*}

and the result follows.
\end{proof}

\section{Characterization of \texorpdfstring{$3$}{3}-regular permutation graphs}
\label{Characterization}

Table~\ref{boxcar_table} shows induced subgraphs we use in our construction of $3$-regular permutation graphs. A {\it boxcar graph} is a graph that can be constructed by the following process.
\begin{enumerate}[(A)]
\item \label{A} Let $S_1 = G_1$. Then go to (\ref{B}).
\item \label{B} Choose $S_2$ to be one of $\{G_2, G_3, G_4\}$. Then go to (\ref{C}).
\item \label{C} Let $G$ be the graph obtained by identifying the rightmost vertex of $S_1$ with the leftmost vertex of $S_2$. If $S_2 = G_4$, go to (\ref{D}); otherwise, set $S_1 = G$ and go to (\ref{B}).
\item \label{D} Stop. The result is the graph $G$.
\end{enumerate}

\begin{table}[htb] \caption{Some induced subgraphs of boxcar graphs} 
	\centering      
	\begin{tabular}{ >{\centering\arraybackslash}m{.8in}  >{\centering\arraybackslash}m{1.1in}  >{\centering\arraybackslash}m{1.3in} >{\centering\arraybackslash}m{1.1in } }  
		&&&\\
		\includegraphics[scale=0.4]{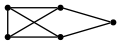} \caption*{$G_1$}&	\includegraphics[scale=0.4]{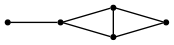} \caption*{$G_2$}& 	\includegraphics[scale=0.4]{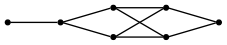} \caption*{$G_3$}&
		\includegraphics[scale=0.4]{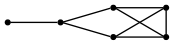} \caption*{$G_4$}\\       
	\end{tabular} \label{boxcar_table}  
\end{table}

\begin{lemma}\label{blowups}
A $3$-regular graph that is a blow-up of a path is isomorphic to $K_4$, $K_{3,3}$, or a boxcar graph.
\end{lemma}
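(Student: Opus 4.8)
The plan is to translate the problem into a system of degree equations on the blocks of the blow-up and then read off the possible solutions. Write the blow-up as $P_m[H_1,\dots,H_m]$, where $H_i$ is a complete or empty graph on $a_i\ge 1$ vertices (a single vertex counting as both), and set $a_0=a_{m+1}=0$. Every vertex of $H_i$ is adjacent to all of $H_{i-1}$ and $H_{i+1}$, together with the remaining $a_i-1$ vertices of $H_i$ when $H_i$ is complete, and all vertices of a block have equal degree. Hence $3$-regularity is equivalent to the block-wise system
\[
\varepsilon_i(a_i-1)+a_{i-1}+a_{i+1}=3\qquad(1\le i\le m),
\]
where $\varepsilon_i=1$ if $H_i$ is complete and $\varepsilon_i=0$ if $H_i$ is empty. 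First I would record the elementary consequences: $a_{i-1}+a_{i+1}\le 3$ for every $i$; an interior complete block has size at most $2$; and no block has size exceeding $3$ unless $m=1$, which forces $K_4$.

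Next I would clear away the degenerate and large-block cases. The cases $m=1$ and $m=2$ are a short finite check yielding exactly $K_4$ and $K_{3,3}$. If some block has size $3$, I would propagate the equations outward: a size-$3$ block forces its neighbors to be small and pins it either to an end of the path or to the center of a path on three blocks, and each resulting configuration collapses to $K_{3,3}$ (or to $K_4$ when the size-$3$ block is complete). After this reduction I may assume $m\ge 3$ and $a_i\in\{1,2\}$ for all $i$.

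The core of the argument is the case $a_i\in\{1,2\}$. Here I would first show both end blocks must be $K_2$: an empty or singleton end block would force its unique neighbor to have size $3$. This gives $a_1=a_2=a_{m-1}=a_m=2$ with $H_1,H_m$ complete. I would then run a left-to-right forcing: the equations successively determine the sizes and types, producing the initial segment $K_2,I_2,I_1,I_1$ and then, at each size-$2$ block reached after a pair of singletons, exactly one binary choice, continuing the sequence either by the segment $I_2,I_2,I_1,I_1$ or by the segment $K_2,I_1,I_1$. These two segments are precisely the repeatable middle pieces $G_2$ and $G_3$ of Table~\ref{boxcar_table}, joined to their neighbors along shared singleton ($I_1$) blocks, while the right-end equations force the sequence to terminate in the closing piece $G_4$. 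Thus any valid sequence is a boxcar graph.

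The step I expect to be the main obstacle is this last one: making the forcing rigorous and matching the forced sequence of segments to the formal gluing construction that defines boxcar graphs. Two points need care. First, I must show that the branch at each size-$2$ block is the only freedom and that the independent left- and right-end forcings are compatible; this is exactly where the small infeasible lengths $m\in\{3,4,5\}$ arise, and where the parity and length constraints behind $a(8)=0$ must be verified. Second, because boxcar graphs are built by \emph{identifying} a boundary vertex rather than by concatenating blocks, I must check that cutting the path-blow-up at its singleton blocks reproduces exactly the pieces $G_1,\dots,G_4$, with the two half-degrees at each shared vertex summing to $3$, so that the abstract graph is genuinely isomorphic to a boxcar graph as defined. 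By comparison, the size bounds and the $m\le 2$ and size-$3$ reductions are routine.
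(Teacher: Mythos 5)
Your proposal is correct and follows essentially the same route as the paper's proof: both arguments are a left-to-right degree-forcing along the blocks of the path blow-up, in which blocks of size $\geq 3$ collapse to $K_4$ or $K_{3,3}$, and blocks of size $1$ or $2$ are forced into the repeating segments (with one binary choice per segment) that constitute a boxcar graph. Your explicit system $\varepsilon_i(a_i-1)+a_{i-1}+a_{i+1}=3$ is simply a cleaner algebraic packaging of the degree counts that the paper carries out case by case starting from $H_1$.
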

\begin{proof}
Let $G$ be a path graph $P_n$ with vertices $(v_1, v_2, \ldots, v_n)$ in standard order, and consider a blow-up $G^* = G[H_1, H_2, \ldots, H_n]$. There are four possibilities for the graph $H_1$.

Suppose the first vertex $v_1$ is blown up into $K_k$ or $I_k$, with $k \geq 4$. If $v_2$ exists, then the vertices resulting from blowing up $v_2$ will have degree at least $4$. Thus to obtain a $3$-regular graph, $v_1$ must be the only vertex of $G$, and it must be blown up into $K_4$.

Now suppose $H_1 \cong K_3$. Then $v_2$ must be blown up into a graph of order $1$ because the vertices from $K_3$ require one more neighbor to have degree $3$. Since all the vertices have degree $3$, we see that $G$ must be $P_2$, and it blows up into $P_2[K_3, K_1] \cong K_4$.

Suppose $H_1 \cong I_k$, where $k \leq 3$. Since the vertices of $H_1$ require $3$ neighbors, $v_2$ must be blown up into a graph of order $3$. If $H_2 \cong K_3$, then $k = 1$ and we have $K_4$ as in the case above. If $H_2 \cong I_3$, then the vertices of $H_2$ have $k$ neighbors on left and they requre $3-k$ neighbors on the right. In order to not exceed degree $3$, we must have $H_3 \cong I_{k-3}$. Thus we obtain $P_2[I_3, I_3]$, $P_3[I_1, I_3, I_2]$, or $P_3[I_2, I_3, I_1]$, all of which are isomorphic to $K_{3,3}$.

The only remaining cases are when $H_1$
is isomorphic to $K_2$. If $H_1 \cong K_2$, then $H_2$ must have order $2$. If $H_2 \cong K_2$, then we have $G^* = P_2[K_2, K_2] \cong K_4$. If $H_2 \cong I_2$, then $H_3$ must have order $1$, so $H_3 \cong K_1$ and we see that $G^*$ must begin with $G_1$ from Table~\ref{boxcar_table}. Then $H_4 \cong K_1$.
We will use the following sublemma in the remaining cases for the construction of $G^*$.

\begin{sublemma}\label{morebox}
Let $G'$ be $G^*$ restricted to the vertices obtained from blowing up $(v_1, v_2, \ldots, v_k)$ for $k \leq n$. If $G'$ is $3$-regular except for a leaf, then the vertices $(v_{k-1}, v_k, v_{k+1}, v_{k+2})$ or $(v_{k-1}, v_k, v_{k+1}, v_{k+2}, v_{k+3})$ of $G$ must blow up to induce one of $\{G_2, G_3, G_4\}$ in $G^*$.
\end{sublemma}

Assume $v_k$ blows up to be a leaf in $G^*$. Then $H_{k-1} \cong H_{k} \cong K_1$, and $H_{k+1}$ must have order $2$. If $H_{k+1} \cong K_2$, then $H_{k+2}$ must have order $1$, and we have $G_2$ as an induced subgraph on $\cup_{i=k-1}^{k+2} V(H_i)$.
If instead $H_{k+1} \cong I_2$, then we must have either $H_{k+2} \cong I_2$ and $H_{k+3} \cong K_1$, giving us $G_3$, or $H_{k+2} \cong K_2$, giving us $G_4$. Thus \ref{morebox} holds.

By repeated application of \ref{morebox}, we see that $G^*$ consists of a series composition of $G_1$ followed by a sequence of graphs isomorphic to $G_2$ or $G_3$ and terminating in $G_4$.
\end{proof}

The following lemmas will be useful in our characterization of $3$-regular permutation graphs. We say that vertices $v_1$ and $v_2$ are {\it twins} if $N(v_1) - \{v_2\} = N(v_2) - \{v_1\}$, where $N(v_i)$ is the set of vertices that neighbor $v_i$. We do not distinguish between twins that are adjacent and those that are not.


A subsequence $\pi _{i_1},\dots ,\pi_{i_k}$ of a permutation $\pi$ is called {\it consecutive} if $$\pi_{i_{j+1}}=\pi_{i_j}+1 \ \text{or} \ \pi_{i_{j+1}}=\pi_{i_j}-1$$ and it is called {\it contiguous} if $${i_{j+1}}={i_j+1}$$ for all $j=1, \dots , k-1$. 

The following lemma appears to be well known in the field of modular decompositions. We include its proof for completeness.

\begin{lemma} \label{twins}
Every permutation graph $G$ has a realizer $\pi$ where, for every pair of twins $u$ and $v$ in $G$, there is a contiguous, consecutive increasing or decreasing subsequence $s$ of $\pi$ that contains $u$ and $v$. Moreover, $u$ and $v$ are adjacent in $G$ if and only if $s$ is decreasing.
\end{lemma}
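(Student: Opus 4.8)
The plan is to group the twins into equivalence classes, recognize $G$ as a blow-up of the quotient graph obtained by contracting each class to a single vertex, and then read off a realizer directly from the explicit construction in the proof of Lemma~\ref{complemma}. First I would show that the twin relation is an equivalence relation whose classes are exactly the maximal sets of pairwise twins. Reflexivity and symmetry are immediate; the substantive point is transitivity. Suppose $u,v$ are twins and $v,w$ are twins, with $u,v,w$ distinct. For any $x \notin \{u,v,w\}$ the two twin conditions give $x \sim u \Leftrightarrow x \sim v \Leftrightarrow x \sim w$, so $x$ relates to $u$ and $w$ in the same way. Applying the twin conditions to $u$, $v$, and $w$ themselves yields $u \sim v \Leftrightarrow u \sim w$ (from $v,w$ being twins, with $x=u$) and $u \sim w \Leftrightarrow v \sim w$ (from $u,v$ being twins, with $x=w$); chaining these shows $u \sim v$, $u \sim w$, and $v \sim w$ are either all edges or all non-edges. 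In particular $v \sim u \Leftrightarrow v \sim w$, which settles the remaining case $x=v$ of the twin condition for $u$ and $w$, so $u$ and $w$ are twins. The same triple argument, applied inside a class, shows every twin class $C$ induces either a complete graph or an empty graph.

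Next I would observe that each twin class is a module: if $u,u' \in C$ and $x \notin C$, then $x \sim u \Leftrightarrow x \sim u'$ because $u$ and $u'$ are twins, so every vertex outside $C$ is adjacent to all of $C$ or to none of it. Let $C_1, \dots, C_k$ be the twin classes and form the quotient graph $G'$ on vertices $w_1, \dots, w_k$, where $w_iw_j \in E(G')$ precisely when the vertices of $C_i$ are adjacent to those of $C_j$; this is well defined by the module property. Choosing one representative from each class exhibits $G'$ as an induced subgraph of $G$, so $G'$ is a permutation graph. Writing $H_i = K_{|C_i|}$ when $C_i$ induces a clique and $H_i = I_{|C_i|}$ when it induces an empty graph, we have $G \cong G'[H_1, \dots, H_k]$.

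Finally I would apply the realizer construction from the proof of Lemma~\ref{complemma} to $G'$ and the graphs $H_i$, taking as the realizer of each $H_i$ the decreasing sequence $[\,|C_i|, |C_i|-1, \dots, 1\,]$ when $H_i$ is complete and the increasing sequence $[\,1, 2, \dots, |C_i|\,]$ when $H_i$ is empty. In the resulting realizer $\pi$ of $G$, the entries coming from $C_i$ occupy a set of consecutive positions and, by that construction, receive a block of consecutive integer values arranged in the order prescribed by the realizer of $H_i$; this order is monotone, decreasing exactly when $C_i$ is a clique. Since every twin pair $u,v$ lies in a common class $C_i$, the portion of this block running from $u$ to $v$ is a contiguous subsequence of consecutive values that is increasing or decreasing, and it is decreasing if and only if $C_i$ is a clique, i.e. if and only if $u$ and $v$ are adjacent. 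I expect the main obstacle to be the transitivity step: one must verify that chains of (possibly mixed adjacent and non-adjacent) twins collapse so that each entire class has a single type, which is exactly what allows a single realizer to handle all twin pairs simultaneously.
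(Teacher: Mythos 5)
Your proof is correct, and it takes a genuinely different route from the paper's. The paper performs local surgery on an arbitrary realizer: for twins $u < v$ it deletes $v$, shifts the values strictly between $u$ and $v$ up by one, and reinserts $u+1$ immediately to the right of $u$ (non-adjacent case) or to the left (adjacent case), then argues that such moves never separate previously formed contiguous, consecutive monotone blocks, so all twin pairs can be fixed in succession. You instead construct one realizer globally: the mixed twin relation is an equivalence relation whose classes are pure (all-adjacent or all-non-adjacent) modules, the quotient $G'$ is a permutation graph because it is isomorphic to an induced subgraph of $G$, and $G \cong G'[H_1, \dots, H_k]$ with each $H_i$ complete or empty, so the explicit realizer from the proof of Lemma~\ref{complemma}, fed monotone realizers for the $H_i$, places each class in a block of consecutive positions with consecutive values, decreasing exactly for the clique classes. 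Your transitivity argument --- the step you rightly flagged as the crux --- is sound: chaining the two twin conditions forces the three mutual adjacencies among $u$, $v$, $w$ to be all edges or all non-edges, so a mixed chain (an adjacent twin pair meeting a non-adjacent one) cannot occur and each class has a single type. As for what each approach buys: the paper avoids the equivalence-class machinery but carries the bookkeeping burden of showing successive modifications do not interfere; your construction handles all pairs in one shot, proves the stronger fact that every entire twin class occupies a contiguous, consecutive monotone block (which is the form in which the paper actually uses the lemma inside Lemma~\ref{twins2}), and essentially anticipates Lemma~\ref{counter} from the paper's conclusion. One point worth making explicit if you write this up: you are invoking the construction inside the proof of Lemma~\ref{complemma} (that each block gets consecutive positions and consecutive values), not merely its statement, as well as closure of permutation graphs under induced subgraphs; both are available earlier in the paper, so there is no circularity.
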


\begin{proof}
Let $\pi$ be a realizer of a graph $G$, and define $G_\pi$ to be a graph isomorphic to $G$ with vertex labels corresponding to $\pi$. Let $u$ and $v$ be twins in $G_\pi$ with $u < v$. We will first assume $u$ and $v$ are nonadjacent. If $u$ and $v$ are not part of a contiguous, consecutive increasing subsequence of $\pi$, then we can obtain another realizer $\pi'$ of $G$ by removing $v$ from $\pi$, shifting all of the entries greater than $u$ and less than $v$ up by 1, and inserting $u+1$ to the immediate right of $u$. Clearly $\pi$ and $\pi'$ realize isomorphic graphs, and if $a$ and $b$ are entries of $\pi$ that belong to a contiguous, consecutive increasing or decreasing subsequence of $\pi$, then this transformation does not separate them.


If we assume instead that $u$ and $v$ are adjacent in $G_\pi$, then we apply a similar transformation, ultimately placing $u+1$ to the left of $u$ instead of the right. This results in $u$ and $u+1$ being part of a contiguous, consecutive decreasing subsequence, instead of increasing.
\end{proof}

\begin{lemma} \label{twins2}
If $G^*$ is a graph with maximum degree $d$, and if $G$ is a graph of minimum order such that $G^*$ is a blow-up of $G$, then $G$ has no degree $d$ twins.
\end{lemma}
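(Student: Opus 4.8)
The plan is to argue by contradiction. Suppose $G$ has a pair of twins $u$ and $v$, each of degree $d$; I will merge them into a single vertex to produce a graph $G'$ of strictly smaller order for which $G^*$ is still a blow-up, contradicting the minimality of $G$. Write $G^* = G[H_1, \dots, H_n]$, and for a vertex $w$ of $G$ let $H_w$ denote the graph into which $w$ is blown up. The crucial first observation is that a vertex of degree $d$ is blown up in a very restricted way. If $w$ has degree $d$ in $G$, then each vertex of $H_w$ is adjacent in $G^*$ to at least one vertex coming from $H_n$ for every neighbor $n$ of $w$, so its degree satisfies
\[
\deg_{G^*}(\cdot) \;\geq\; \sum_{n \in N(w)} |V(H_n)| \;\geq\; \deg_G(w) \;=\; d.
\]
Since $d$ is the maximum degree of $G^*$, equality must hold throughout: every neighbor of $w$ is blown up into a single vertex, and $H_w$ has no internal edges, i.e. $H_w$ is an empty graph.

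I would then apply this observation to the twins $u$ and $v$, splitting into two cases according to whether they are adjacent in $G$. If $u$ and $v$ are nonadjacent, then both have degree $d$, so the observation forces $H_u$ and $H_v$ to be empty graphs, and nonadjacency means there are no edges between them in $G^*$; being twins, the vertices of $H_u$ and $H_v$ all have the same neighbors in $G^*$, namely the blow-ups of the common neighborhood $S = N(u)\setminus\{v\} = N(v)\setminus\{u\}$. Here I would delete $v$ and blow the surviving vertex $u$ up into $I_{|V(H_u)|+|V(H_v)|}$. If instead $u$ and $v$ are adjacent, then each is a degree-$d$ vertex having the other as a neighbor, so the observation forces both $H_u$ and $H_v$ to be single vertices, joined by an edge in $G^*$ and again sharing the neighborhood given by the blow-up of $S$; in this case I would merge $u$ and $v$ into one vertex of $G'$ blown up into $K_2$. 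In either case the merge reproduces $G^*$ on one fewer vertex.

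The main thing to verify, and the step I expect to require the most care, is that the merged graph $G'$ is genuinely a graph of which $G^*$ is a blow-up: one must check that identifying $u$ and $v$ introduces no inconsistent adjacencies with the rest of $G$ (this is exactly where the twin hypothesis, supplying the common external neighborhood $S$, is used), and that the combined block $H_u \cup H_v$ is itself a legitimate blow-up target, namely empty in the nonadjacent case and complete in the adjacent case. The degree-$d$ hypothesis is precisely what guarantees this compatibility; without it, $u$ and $v$ could be blown up into incompatible graphs whose union is neither complete nor empty, and no merge would be possible. Since $G'$ then has strictly smaller order than $G$ while $G^*$ remains a blow-up of $G'$, this contradicts the minimality of $G$ and completes the argument.
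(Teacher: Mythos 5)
Your proof is correct, but it takes a genuinely different route from the paper's. The paper proves this lemma inside the permutation-graph framework: it invokes Lemma~\ref{twins} to obtain a realizer of $G$ in which the twins $u$ and $v$ are adjacent and consecutive, deduces that $u$ and $v$ must be blown up into empty graphs $I_j$ and $I_k$ (and that $u$ and $v$ must be nonadjacent unless $j=k=1$), and then reads off from the realizer $\pi^*$ that $\{u_1,\dots,u_j,v_1,\dots,v_k\}$ is a contiguous, consecutive block forming a twin class of $G^*$, hence blowable-up from a single vertex of a smaller base graph. You replace all of the realizer machinery with the purely structural observation that a degree-$d$ vertex $w$ of $G$ forces $H_w$ to be empty and every neighbor of $w$ to be blown up into a singleton; the case split on adjacency and the merge into $I_{j+k}$ or $K_2$ then contradict minimality exactly as in the paper. (One small wording issue: you justify the displayed inequality by saying each vertex of $H_w$ meets ``at least one'' vertex of each $H_n$, but the inequality $\deg_{G^*}(\cdot)\geq\sum_{n\in N(w)}|V(H_n)|$ requires the full join between blobs of adjacent vertices, which is what the paper's definition of composition provides; the inequality itself is correct.) What your route buys is generality and self-containedness: the lemma as stated makes no mention of permutation graphs, and your argument is valid for arbitrary graphs $G^*$, whereas the paper's proof implicitly assumes $G$ has a realizer --- harmless in its applications, but a hypothesis not present in the statement. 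What the paper's route buys is consistency with how blow-ups are used elsewhere in the text, where a realizer of $G^*$ is always manufactured from a realizer of $G$, so the merged twin class comes packaged with an explicit permutation witnessing it.
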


\begin{proof}
Observe that by our construction of blow-ups given in Lemma~\ref{complemma}, if $G^*$ is a blow-up of $G$, than any realizer of $G$ can be used to obtain a realizer for $G^*$ by blow-up. Let $u$ and $v$ be degree $d$ twins of $G$. By Lemma~\ref{twins}, $G$ has a realizer $\pi$ where $u$ and $v$ are adjacent and consecutive. Let $\{u_1, u_2, \ldots, u_j\}$ and $\{v_1, v_2, \ldots, v_k\}$ be the entries of a realizer $\pi^*$ for $G^*$ obtained by blowing up $u$ and $v$, respectively. Then $u$ must be blown up into $I_j$ and $v$ must be blown up into $I_k$, because if they were blown up into $K_j$ or $K_k$ for $k \geq 2$, then we would have vertices with degree exceeding $d$. Moreover, unless $j = k = 1$, the vertices $u$ and $v$ must be nonadjacent. In the case that $j = k = 1$, $u_1$ and $v_1$ are twins in $G^*$, and they are contiguous and consecutive in $\pi^*$, which means that there is a graph such that $\{u, v\}$ is blown up from a single vertex. If $j$ and $k$ are not both $1$ then, $u_1, u_2, \ldots, u_j, v_1, v_2, \ldots, v_k$ are all twins in $G^*$. Therefore, they are part of a contiguous, consecutive increasing sequence of $\pi^*$, so they can also be blown up from a single vertex. This contradicts the assumption that $G$ has minimum order.
\end{proof}

Recall that a {\it ladder} is a graph $P_2 \Box P_n$, with $2n$ vertices ${u_1, \dots , u_n, v_1, \dots , v_n}$ such that each of $\{u_1, \dots , u_n\}$ and $\{v_1, \dots , v_n\}$ induces a $P_n$, and $u_i$ is adjacent to $v_i$ for each $i=1, \dots , n$. Each edge $u_iv_i$ is called a {\it rung} of the ladder.

\begin{lemma} \label{squares}
A $3$-regular permutation graph $G$ cannot have a ladder with four or more rungs as a subgraph.
\end{lemma}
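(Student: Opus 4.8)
The plan is to reduce to a ladder with exactly four rungs, use $3$-regularity to pin down the local structure, and then exhibit in every case a forbidden induced subgraph---either a large hole or one of $F_1,\dots,F_6$ from Table~\ref{Forbid}. Containing a ladder with more than four rungs forces containing one with exactly four (delete the extra rungs), so it suffices to forbid $P_2\otimes P_4$ as a subgraph. Label its vertices $a_1,a_2,a_3,a_4$ and $b_1,b_2,b_3,b_4$ along the two rails, with rungs $a_ib_i$. Since $G$ is $3$-regular, the four interior vertices $a_2,a_3,b_2,b_3$ already have degree $3$ inside the ladder and hence have no further neighbors, while each corner $a_1,b_1,a_4,b_4$ has ladder-degree $2$ and so exactly one additional neighbor in $G$. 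The proof is then a case analysis on these four extra edges; I would run it on a ladder chosen to have the maximum possible number of rungs.

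The first cases produce large holes. For instance, if the extra edge at $a_1$ goes to $b_4$, then $\{a_1,a_2,a_3,a_4,b_4\}$ induces a $C_5$: the potential chords $a_2b_4$ and $a_3b_4$ are absent because $a_2,a_3$ are saturated, and $a_1a_4$ is absent because the unique extra edge at $a_1$ is $a_1b_4$. The same routing around an end of a maximal ladder shows that whenever an extra edge at one end reaches the opposite end---to a far corner, or to an external vertex that coincides with or is adjacent to a far corner's extra neighbor---one closes an induced cycle of length at least five along a rail, all of whose possible chords are killed by saturation of the interior. A complementary case: if the two corners $a_1,b_1$ at one end have extra neighbors $x,y$ with $x\sim y$ and $x,y\notin V(\text{ladder})$, then $xy$ is a new rung and $x-a_1,\ y-b_1$ new rail edges, so the ladder extends by a rung, contradicting maximality.

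After these cases, at a given end the two corners $a_1,b_1$ have extra neighbors $x,y$ that are distinct, mutually non-adjacent, and (again by saturation and by the hole cases) non-adjacent to every remaining ladder vertex. The induced subgraph $H$ on $\{x,y,a_1,a_2,a_3,b_1,b_2\}$ is then a fixed $7$-vertex graph: a $4$-cycle $a_1a_2b_2b_1$ carrying pendant vertices $x$ on $a_1$, $a_3$ on $a_2$, and $y$ on $b_1$. I would show directly that $H$ is not a permutation graph; the quickest route is a short propagation of the $\Gamma$-forcing relation on $\overline{H}$, which, starting from either orientation of the edge $xy$, forces both orientations of $a_3b_1$ by propagating along the $4$-cycle $a_1a_2b_2b_1$ and the pendant edges $xa_1,\ a_3a_2,\ yb_1$; hence $\overline{H}$ is not transitively orientable. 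As $H$ has maximum degree $3$ and no induced cycle of length $\geq 5$, Gallai's characterization then identifies it as one of $F_1,\dots,F_6$, giving the contradiction in this branch.

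The step I expect to be the main obstacle is the \emph{capped} end, where the two corners $a_1,b_1$ share a single common extra neighbor $w$, forming a triangle $wa_1b_1$. In contrast to the pendant configuration above, a capped end is locally harmless: the grid together with such a $w$ is itself a permutation graph, as a one-line realizer check confirms, so no bounded neighborhood yields a contradiction. Resolving this case requires a genuinely global argument---following the third edges of the caps at the two ends and using $3$-regularity together with the already-settled cases to show that the caps' continuations must eventually regenerate a pendant end or close a long induced cycle running through both ends. I would try to make this precise by induction on the number of rungs of a maximal ladder, tracking both ends simultaneously; this is the only place where the ladder's interaction with the rest of $G$, rather than a fixed local picture, is essential, and so it is where the real work lies.
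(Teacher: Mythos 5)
Your reduction to a four-rung ladder and the first rounds of your case analysis line up with the paper's proof, which organizes the same content as three propositions about a maximal ladder: far-end edges produce large holes, adjacent external neighbors at one end extend the ladder (contradicting maximality), and the pendant-end configuration yields the $4$-cycle with three pendants, which is exactly the forbidden graph $F_4$ (your direct non-realizability argument for that $7$-vertex graph is a legitimate substitute for citing the table). But there is a genuine gap at exactly the place you flag: the capped end is never resolved, only deferred to a proposed induction ``tracking both ends'' that you do not carry out.

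Moreover, the premise behind that deferral is false. Your check that the ladder together with the cap $w$ is a permutation graph is correct but irrelevant, because it forgets to apply $3$-regularity to $w$ itself. In $G$, the cap $w$ must have a third neighbor $w'$. This $w'$ cannot be an interior ladder vertex (those are saturated), and it cannot be $a_4$ or $b_4$, since then $\{w, a_1, a_2, a_3, a_4\}$ (or the symmetric set on the other rail) would induce a $C_5$, all potential chords being excluded by saturation and by the choice of $w$'s three neighbors. So $w'$ lies outside the ladder and is non-adjacent to each of $a_1, b_1, a_2, b_2$, all of which are saturated; hence $\{w', w, a_1, a_2, b_1, b_2\}$ induces the forbidden graph $F_5$: the triangle $w a_1 b_1$ sharing the edge $a_1 b_1$ with the square $a_1 a_2 b_2 b_1$, plus the pendant $w'$ on $w$. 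This is precisely how the paper's proposition (3) disposes of the common-neighbor case. So the capped end is just as local as the pendant end --- a radius-two neighborhood of the first rung suffices --- and with this observation inserted, your argument closes and becomes essentially the paper's proof; without it, the proof is incomplete.
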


\begin{proof}
Suppose $G$ has a ladder as a subgraph, and let $u_i$ and $v_i$ be adjacent vertices on the $i$th rung of a maximal ladder for $i$ in $\{1, 2, \ldots, k\}$. We will prove the lemma by considering three propositions.

\begin{enumerate}
\item \label{squares1} A ladder with three or more rungs cannot have an edge between opposite vertices on the same side of the ladder, such as $v_1$ and $v_k$.
\item \label{squares2} A ladder with four or more rungs cannot have an edge between opposite vertices on the different sides of the ladder, such as $v_1$ and $u_k$.
\item \label{squares3} There cannot be a ladder with three or more rungs without an edge between the first and last rung of the ladder.
\end{enumerate}


To prove proposition (\ref{squares1}), suppose that $k = 3$. Let $v_1$ and $v_k$ be adjacent, and suppose first that $u_1$ and $u_k$ are not. Then we have a large hole using vertices $\{v_1, u_1, u_2, u_3, v_3\}$. However, if $u_1$ and $u_k$ are also adjacent, then we have $\prism$.
Next suppose $k = 4$. If $(v_1, v_k)$ is an edge and $(u_1, u_k)$ is not, then $\{v_1, u_1, u_3, u_4, v_4\}$ is a large hole. If $(u_1, u_k)$ is also an edge, then the graph is isomorphic to a cube ($C_4 \Box K_2$), which has $C_6$ as an induced subgraph by deleting a pair of opposite vertices.
Finally, suppose $k \geq 5$. Then $\{v_1, v_2, \ldots, v_k\}$ is a large hole.


Similarly, for proposition (\ref{squares2}), if $v_1$ and $u_k$ are adjacent, we have a large hole using $\{v_1, v_2, u_2, u_3, \ldots, u_k\}$.


Finally, for proposition (\ref{squares3}), suppose $v_1$ and $u_1$ have a common neighbor $v$. Then $v$ cannot have $v_k$ or $u_k$ as neighbors, or else we have a large hole. So $v$ has another neighbor $v'$, but this gives us $\rocket$ using $\{v', v, v_1, v_2, u_1, u_2\}$. Suppose instead that the third neighbors of $v_1$ and $u_1$ are $v$ and $u$, respectively, with $v \neq u$. Then we have $\virus$ using $\{v, v_1, v_2, u, u_1, u_2, u_3\}$.
\end{proof}

We now prove that the graphs from Lemma~\ref{blowups} are the only $3$-regular permutation graphs.

\begin{theorem}
Every connected $3$-regular permutation graph is the blow-up of a path.
\end{theorem}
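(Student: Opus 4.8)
The plan is to take a connected $3$-regular permutation graph $G$ and realize it as a blow-up of a path by passing to a smallest possible base. Concretely, I would let $G'$ be a graph of minimum order such that $G$ is a blow-up of $G'$, so that $G = G'[H_1,\dots,H_n]$ with each $H_i$ a complete or empty graph by the definition of blow-up. Since $G$ is connected, $G'$ is connected, and by Lemma~\ref{twins2} the base $G'$ has no degree-$3$ twins. The whole theorem then reduces to the single statement that this minimal base $G'$ is a path, which by Lemma~\ref{blowups} is equivalent to $G$ being $K_4$, $K_{3,3}$, or a boxcar graph.

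First I would bound the degrees in $G'$. If a vertex $w\in V(G')$ is blown up into $H_w$ and its $G'$-neighbors are blown up into graphs whose orders sum to $S$, then every vertex of $H_w$ has $G$-degree $S+\deg_{H_w}(\cdot)=3$; as each neighbor contributes at least one vertex, $\deg_{G'}(w)\le S\le 3$, so $\Delta(G')\le 3$. A vertex $w$ of degree exactly $3$ then forces all three of its neighbors $a,b,c$ to be singleton classes and $H_w\cong I_h$ with $h\le 3$, and a representative $x\in H_w$ satisfies $N(x)=\{a,b,c\}$. I would rule this out according to $h$. When $h=3$, the component collapses to $K_{3,3}$ and $a,b,c$ become mutual degree-$3$ twins, so the true minimal base is the path $P_2$, not a graph with a degree-$3$ vertex; when $h\in\{1,2\}$ one obtains a claw or a $K_{2,3}$ centered at $w$ in which each of $a,b,c$ carries one or two additional edges, and the core of the argument is a short case analysis on how those edges attach (tracking which of $a,b,c$ are adjacent or twins) that always exposes an induced large hole or one of $\squid,\lobster,\starfish,\virus,\rocket,\prism$ from Table~\ref{Forbid}, with Lemma~\ref{squares} available to dispatch the ladder-like configurations. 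This contradicts $G$ being a permutation graph, hence $\Delta(G')\le 2$.

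A connected graph of maximum degree at most $2$ is a path or a cycle, so it remains only to exclude $G'\cong C_k$. Choosing one representative from each blow-up produces an induced $C_k$ in $G$, which for $k\ge 5$ is a large hole and therefore forbidden. For the small cases I would check directly that every $3$-regular blow-up of $C_3$ is isomorphic to $K_4$ and every $3$-regular blow-up of $C_4$ is isomorphic to $K_{3,3}$; since both of these have the strictly smaller base $P_2$, neither $C_3$ nor $C_4$ can be the minimal base of $G$. Thus $G'$ is a path and $G=G'[H_1,\dots,H_n]$ is a blow-up of a path, as claimed.

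I expect the degree-$3$ elimination in the second paragraph to be the main obstacle, since it is the only step that genuinely invokes the list of forbidden induced subgraphs; producing a clean, exhaustive split on the second-neighborhood of $w$ (and verifying that each branch contains a large hole or some $F_i$) is where the real work lies. By comparison, the degree bound and the cycle exclusion are routine counting and small-case verifications.
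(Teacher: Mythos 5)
Your high-level strategy is the same as the paper's: pass to a minimum-order base, invoke Lemma~\ref{twins2} to restrict twins, bound the base's maximum degree by $3$, dispose of the cycle case exactly as the paper does ($C_k$ with $k\geq 5$ gives a large hole; $C_3$ and $C_4$ blow up only to $K_4$ and $K_{3,3}$), and eliminate degree-$3$ vertices of the base by analyzing their neighborhoods against Table~\ref{Forbid} with the help of Lemma~\ref{squares}. The problem is that the step you defer --- ``a short case analysis on how those edges attach'' --- is not a routine verification: it is essentially the entire proof in the paper, a long analysis organized by whether $N(v)$ induces $I_3$, $K_2\oplus I_1$, $P_3$ or $K_3$, and, within the $I_3$ case, by the number of squares through $v$. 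A proposal that reduces the theorem to this analysis and then leaves it as a sketch has not proved the theorem; you say yourself that this is ``where the real work lies.''

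There is also a substantive error in what you claim that analysis would show, and it matters because the tools you cite cannot close those branches. It is not true that every branch ``exposes an induced large hole or one of $F_1,\ldots,F_6$'': some branches are induced subgraphs of genuine boxcar graphs and therefore contain no forbidden subgraph at all. Concretely, take $h=1$ and suppose $w$ lies in a square $w,a,z,b$ in which $a$, $b$, $z$ all have degree $2$ in the base; blowing $z$ up into $K_2$ realizes the boxcar end $G_4$ of Table~\ref{boxcar_table}, so this configuration extends to an honest $3$-regular permutation graph and no $F_i$ or large hole can ever appear. The only way to exclude it from a \emph{minimal} base is to note that $a$ and $b$ are twins --- but they are twins of degree $2$, so Lemma~\ref{twins2}, which only forbids degree-$3$ twins, does not apply. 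You gesture at ``tracking twins,'' but what you actually need is that a minimum-order base admits no mergeable twin pair of any degree, i.e., the twin-quotient argument the paper only records later as Lemma~\ref{counter}. The paper avoids this issue by not claiming a contradiction in such branches at all: it sorts neighborhoods into contradictory ones and ``admissible'' pieces of a path blow-up (its case (1)) and treats the latter as consistent with the conclusion. So beyond actually carrying out the case analysis, your plan needs either that stronger twin lemma or the paper's restructuring; as written, the degree-$3$ elimination would fail on exactly the configurations that occur in boxcar graphs.
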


\begin{proof}
Suppose $G^*$ is a $3$-regular permutation graph that is not a blow-up of a path. Let $G$ be a permutation graph of minimum order such that $G^*$ is a blow-up of $G$. Then $G$ is either a cycle or $G$ has a degree $3$ vertex.

If $G$ is a cycle, then $G$ must be $C_3$ or $C_4$, because larger cycles are forbidden as induced subgraphs. In $C_3$, since all the vertices are adjacent to each other and they all have degree $2$, only one vertex can be blown up or else we would have a vertex with degree exceeding $3$. Moreover, the vertex must be blown up into $K_2$ in order for every vertex to have degree $3$. The resulting graph is $K_4$. In $C_4$, since every vertex has degree $2$, at most one of the neighbors of every vertex can be blown up. If a vertex $v$ is blown up into $K_2$, then everything in the resulting graph will have degree $3$ expect for the vertex that was opposite of $v$, and it is impossible to use further blow-ups to obtain a $3$-regular graph. Thus, the only possibility that gives a $3$-regular graph is blowing up each of two adjacent vertices into $I_2$. This gives a graph isomorphic to $K_{3,3}$.

Suppose instead that $G$ has a vertex $v$ of degree $3$. Note that $G$ has maximum degree $3$. We proceed by analyzing the possible induced subgraphs $H$ containing $v$ and its neighborhood. In some cases, $H$ has degree-$3$ twins, which by Lemma~\ref{twins2} contradicts the assumption that $G$ is a minimal-order graph that blows up into $G^*$. In some of the remaining cases, $H$ can be blown up (perhaps trivially) into an induced subgraph of a boxcar graph. If $H$ cannot be blown up this way, then we give one of two reasons why. Either $H$ is not an induced subgraph of a $3$-regular permutation graph and cannot be blown up without creating a vertex of degree $4$ or greater, or $H$ has a large hole or forbidden induced subgraph from Table~\ref{Forbid}.

Let the neighbors of $v$, $N(v)$, be $\{v_1, v_2, v_3 \}$. We will consider the following cases based upon the possible subgraphs induced by $N(v)$: 
\begin{enumerate}
    \item $P_3$, 
    \item $K_3$, 
    \item $I_3$, and
    \item $K_2\oplus I_1$.
\end{enumerate}

In cases (1) and (2), there are twin vertices of degree $3$, contradicting Lemma~\ref{twins2}.

In case (3), $N(v)$ induces $I_3$, that is, none of the vertices in $N(v)$ are adjacent.
Observe that if $v$ is adjacent to a leaf in $G$, then $v$ must be blown up into $I_3$ in order to obtain a $3$-regular graph. This implies that all the neighbors of $v$ in $G$ must be leaves or else we would have a vertex of degree exceeding $3$ upon blowing up $v$ to $I_3$. The resulting graph of this blow-up is $K_{3,3}$. Thus we may assume that all vertices adjacent to a degree $3$ vertex have degree at least $2$.

We will proceed by considering the number of squares that contain $v$ as a vertex. If $v$ is not involved in any squares, then the subgraph induced by $N(v)$ and its neighbors has either $\lobster$ or a large hole as an induced subgraph. Suppose instead that $\{v, v_2, v_3\}$ are used in a square. Let $v_4$ be the remaining vertex of the square. In the case that $v_4 = v_1$, then we have an induced subgraph of $G_2$ from Table~\ref{boxcar_table}; in particular, it is the blow-up of a path. In the remaining case where $v_4$ is a distinct vertex, if each one of $\{v_2, v_3, v_4\}$ has degree $2$, then $v_4$ can be blown up into $K_2$ to realize $G_4$ from Table~\ref{boxcar_table}. Similarly, if $v_2$ and $v_3$ have degree $2$ and $v_4$ has degree $3$, then $v_4$ can be blown up into $I_2$ to realize $G_3$. If, however, only one of $\{v_2, v_3\}$ has degree $3$, or they both have degree $3$ and $v_4$ has degree $2$, then we cannot perform any more blow-ups without creating a vertex with degree exceeding $3$.
If all of $\{v_2, v_3, v_4\}$ have degree $3$, then depending the configuration of the remaining edges, we either have $\virus$, $\rocket$, or a large hole as an induced subgraph. 

Suppose $v$ is used in two squares. One possibility is for two neighbors of $v$ to be involved in both squares; say $\{v, v_2, v_3\}$ are involved in two distinct squares. This implies that $v_2$ and $v_3$ are degree $3$ twins. Another possibility is for the two squares to share a single edge, creating a ladder subgraph with at least three rungs. Observe that if the largest ladder subgraph using $v$ has three rungs, and there is an edge between two opposite vertices in a cycle around the ladder, then $v$ is involved in at least three squares. The remaining possibilities for a ladder on three or more rungs using $v$ contradict cases (\ref{squares1}), (\ref{squares2}), and (\ref{squares3}) from Lemma~\ref{squares}.

The final possibilities when $N(v)$ induces $I_3$ are for $v$ to be involved in three or more squares. If $v$ is in exactly $3$ squares, either there is an induced $6$-cycle around $v$, or the vertices at distance $2$ or less from $v$ induce $G_3$ as a subgraph. If $v$ is used in more than three squares, then $G \cong K_{3,3}$.

In case (4), $N(v)$ induces $K_2 \oplus I_1$. Suppose that $\{v, v_2, v_3\}$ forms a triangle. If one of $\{v_2, v_3\}$ has degree $2$, then the graph cannot be blown up to be $3$-regular. If they both have degree $3$ and are not in a square with $v_1$, then either they have a common neighbor other than $v$, giving us $G_2$ from Table~\ref{boxcar_table}, or they have different neighbors, giving us $\starfish$. Suppose that $\{v, v_2, v_3\}$ is a triangle and $\{v, v_1, v_2, v_4\}$ is a square for some new vertex $v_4$. If $v_3$ has degree $2$, then $G$ cannot be blown up into a $3$-regular graph. If $v_3$ is adjacent to $v_4$, then this is isomorphic to $G_1$. If $v_3$ is adjacent to a new vertex $v_5$, then we have $\rocket$ as an induced subgraph.

Finally, let $\{v, v_2, v_3\}$ be a triangle, and suppose there are squares $\{v, v_1, v_2, v_4\}$ and $\{v, v_1, v_3, v_5\}$. If $v_4 = v_5$, then $v_4$ and $v$ are twins; a contradiction. Suppose $v_4 \neq v_5$. If $v_4$ and $v_5$ are nonadjacent, then $\{v_1, v_2, v_3, v_4, v_5\}$ is a large hole, and if they are adjacent, then our induced subgraph is isomorphic to $\prism$.
\end{proof}

This theorem and Lemma~\ref{blowups} immediately imply the following corollary.

\begin{corollary}\label{characterization}
Every $3$-regular permutation graph is isomorphic to $K_4$, $K_{3,3}$, or a boxcar graph.
\end{corollary}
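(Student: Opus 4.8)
The plan is to chain together the two preceding results, since the corollary is an immediate consequence of the preceding theorem (every connected $3$-regular permutation graph is a blow-up of a path) together with Lemma~\ref{blowups} (a $3$-regular blow-up of a path is isomorphic to $K_4$, $K_{3,3}$, or a boxcar graph). First I would take an arbitrary connected $3$-regular permutation graph $G$. The theorem guarantees that $G$ arises as a blow-up of a path; that is, $G \cong P_n[H_1, \ldots, H_n]$ for some $n$ and suitable graphs $H_i$. This is precisely the hypothesis required by Lemma~\ref{blowups}.

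Next I would feed $G$, now presented as a blow-up of a path, into Lemma~\ref{blowups}. Since $G$ is $3$-regular, the lemma applies verbatim and yields $G \cong K_4$, $G \cong K_{3,3}$, or $G$ is a boxcar graph. Composing the two implications delivers the claimed trichotomy, completing the argument. No further construction or case analysis is needed, as all of the substantive work has already been carried out in establishing the theorem and Lemma~\ref{blowups}.

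The one point that merits care is the connectivity hypothesis. The preceding theorem is stated for \emph{connected} graphs, and the three alternatives $K_4$, $K_{3,3}$, and the boxcar graphs are all connected; since the disjoint union of two permutation graphs is again a permutation graph (realize one block entirely below and to the left of the other so that no cross-inversions occur), disconnected $3$-regular permutation graphs such as $K_4 \oplus K_4$ do exist. Thus the statement is to be read as applying to connected graphs, consistent with the abstract and with the theorem being invoked. With that understood, there is no genuine obstacle: the corollary is simply the composition of the two results.
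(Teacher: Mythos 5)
Your proposal is correct and matches the paper exactly: the paper proves this corollary by the single remark that the preceding theorem and Lemma~\ref{blowups} ``immediately imply'' it, which is precisely your chaining of the two results. Your added observation about the connectivity hypothesis (the theorem is stated for connected graphs, and disconnected $3$-regular permutation graphs like $K_4 \oplus K_4$ exist) is a legitimate reading of an implicit convention in the paper, not a deviation in approach.
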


Note that this also implies Corollary~\ref{planar}, since every boxcar graph has a planar embedding.

\begin{corollary}
Every $3$-regular permutation graph has a Hamiltonian path.
\end{corollary}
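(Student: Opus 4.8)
The plan is to invoke the characterization just obtained. By Corollary~\ref{characterization}, every $3$-regular permutation graph is isomorphic to $K_4$, $K_{3,3}$, or a boxcar graph, so it suffices to produce a Hamiltonian path in each of these three cases. The graphs $K_4$ and $K_{3,3}$ each admit a Hamiltonian cycle, hence a Hamiltonian path, so the only real work is with the boxcar graphs.

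For a boxcar graph I would exploit its linear structure. By construction, a boxcar graph is obtained by starting with $G_1$ and successively identifying the rightmost vertex of the current piece with the leftmost vertex of a new piece chosen from $\{G_2, G_3, G_4\}$, terminating with $G_4$. Thus a boxcar graph $B$ is a chain of pieces $B_1, B_2, \ldots, B_m$ with $B_1$ of type $G_1$, $B_m$ of type $G_4$, and each intermediate $B_i$ of type $G_2$ or $G_3$, where consecutive pieces $B_i$ and $B_{i+1}$ meet in a single vertex $w_i$. Since $B$ is a blow-up of a path, each $w_i$ is in fact a cut vertex: the vertices contributed by $B_i$ on the left and by $B_{i+1}$ on the right are nonadjacent except through $w_i$. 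Consequently the block structure of $B$ is itself arranged along a path, and any Hamiltonian path of $B$ must traverse each cut vertex exactly once, so it restricts on each $B_i$ to a spanning path between its two boundary vertices (with one free endpoint in the end pieces $B_1$ and $B_m$).

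This reduces the problem to a finite check: each of the pieces $G_1, G_2, G_3, G_4$ admits a Hamiltonian path joining its left boundary vertex to its right boundary vertex. I would verify this directly on the (small) pieces. For instance, the typical interior piece consists of two copies of $I_2$, say $\{a,b\}$ and $\{c,d\}$, completely joined to each other, with the left boundary vertex $w$ adjacent to $a,b$ and the right boundary vertex $w'$ adjacent to $c,d$; one checks that $w, a, c, b, d, w'$ is a spanning path. Having fixed such a spanning path $P_i$ in each $B_i$ with $P_i$ ending at $w_i$ and $P_{i+1}$ beginning at $w_i$, I would concatenate $P_1, P_2, \ldots, P_m$ at the shared vertices to obtain a Hamiltonian path of $B$.

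The step I expect to require the most care is the finite verification that the endpoints match up consistently. Several of the pieces are bipartite, so I must confirm that in each such piece the two boundary cut vertices lie in opposite parts of the bipartition; otherwise no spanning path between them could exist, and the concatenation would fail. Once this is checked for the finitely many piece types, the stitching is automatic and the argument is complete.
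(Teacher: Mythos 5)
Your proposal is correct and takes essentially the same route as the paper: invoke Corollary~\ref{characterization}, dispose of $K_4$ and $K_{3,3}$ via Hamiltonicity, and build a Hamiltonian path of a boxcar graph by concatenating spanning paths of the pieces $G_1, G_2, G_3, G_4$ at the shared (merged) vertices. Your extra remarks on cut vertices and bipartition parity simply make explicit the finite check the paper states as an observation.
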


\begin{proof}
Clearly $K_4$ and $K_{3,3}$ are Hamiltonian. Observe that every graph in $\{G_1, G_2, G_3, G_4\}$ from Table~\ref{boxcar_table} also has a Hamiltonian path. By merging the degree $1$ vertices to obtain a boxcar graph, we find that Hamiltonian paths of each of the graphs $\{G_1, G_2, G_3, G_4\}$ connected in sequence give a Hamiltonian path for the boxcar graph.
\end{proof}

\section{Enumeration of connected \texorpdfstring{$3$}{3}-regular permutation graphs}
\label{Enumeration}
Using the characterization given in Corollary~\ref{characterization} and a generating function to enumerate $3$-regular permutation graphs, we prove Theorem~\ref{enumeration}. In what follows, we will use {\it sequences for $m$} (where $m$ is a positive integer) to mean equivalence classes of compositions of $m$ into parts of size $2$ and $3$ where a composition and its reverse are considered to be the same. For example, the integer $11$ has five equivalence classes and their representatives are $\gamma _1=(2,3,3,3)$, $\gamma _2=(3,2,3,3)$, $\gamma _3=(3,2,2,2,2)$, $\gamma _4=(2,3,2,2,2)$, $\gamma _5=(2,2,3,2,2)$. The sequence $\gamma _1'=(3,3,3,2)$ belongs to the equivalence class of $\gamma _1$, as $\gamma _1$ and $\gamma _1'$ are reverses of each other.

Before we present the generating function, we first show that sequences for $m$ are in $1-1$ correspondence with boxcar graphs.

\begin{proposition}\label{boxseq}
Let $m = \frac{n-10}{2}$. For even $n$, the number of boxcar graphs on $n$ vertices is equal to the number of sequences for $m$.
\end{proposition}

\begin{proof}
We can think of a boxcar graph as starting with $G_1$, followed by a sequence of $G_2$ and $G_3$ subgraphs, and ending with $G_4$ (see Table \ref{boxcar_table}). Notice that $G_1$ and $G_4$ contribute 5 vertices each to the boxcar graph while each copy of $G_2$ and $G_3$ contribute 4 and 6, respectively, after identification of vertices. Thus the isomorphism class of the graph is determined by the sequence of $G_2$ and $G_3$ subgraphs in the middle. To count boxcar graphs, we wish to find the integer compositions of $n-10$ into parts of size $4$ and $6$, which is equivalent to the integer compositions of $\frac{n-10}{2}$ into parts of size $2$ and $3$. Moreover, since the start and end of the sequence of subgraphs in a boxcar graph are not distinguished, we count a composition and its reverse as being the same.
\end{proof}

Techniques used for some omitted computations below may be found in \cite{wilf}.

\begin{proposition}\label{enumerateseq}
The generating function for sequences for $m$ is

\[
A(x) = \frac{1}{2} \Big( \frac{1}{1 - x^2 - x^3} + \frac{1 + x^2 + x^3}{1 - x^4 - x^6} \Big).
\]
\end{proposition}

\begin{proof}
Since we want to count a composition and its reverse as the same, we may count all compositions that are symmetric, and half of all the non-symmetric ones. This is equivalent to counting half of all compositions, and adding again half of all the symmetric ones.

Let $t_n$ denote the number of compositions of $n$ into parts of size $2$ and $3$. Note that these compositions of $n$ are in bijection with the compositions of $n-2$ and $n-3$ into parts of size $2$ and $3$, because we can obtain such a composition of $n-2$ or $n-3$ by taking such a composition of $n$ and removing the first part, and this operation has a well-defined inverse. Thus $t_n = t_{n-2} + t_{n-3}$. Deriving a generating function $T(x)$ from this recursion relation with $t_0 = t_2 = 1$ and $t_1 = 0$ gives $T(x) = \frac{1}{1 - x^2 - x^3}$.

Observe that a symmetric composition may have an even number of parts, or it may have an odd number of parts with a $2$ or $3$ in the middle. Moreover, each part not in the middle must have an identical part on the opposite end of the composition. Therefore the symmetric compositions of $n$ into parts of size $2$ and $3$ are in bijection with the compositions of $n-k$ into parts of size $4$ and $6$ for all $k$ in $\{0,2,3\}$.

If we let $u_n$ be the number of compositions of $n$ into parts of size $4$ and $6$, we obtain the recurrence relation $u_n = u_{n-4} + u_{n-6}$. Taking $u_0 = u_4 = 1$ and $u_1 = u_2 = u_3 = u_5 = 0$, we can derive the generating function $U(x) = \frac{1}{1 - x^4 - x^6}$. Then the generating function for the number of symmetric compositions is $V(x) = U(x) + x^2 U(x) + x^3 U(x) = \frac{1 + x^2 + x^3}{1 - x^4 - x^6}$. Thus we have $A(x) = \frac{1}{2} (T(x) + V(x))$.
\end{proof}

The above two propositions are used to complete the proof of Theorem~{\ref{enumeration}}.

\begin{proof}[Proof of Theorem \ref{enumeration}]
Corollary~\ref{characterization} tells us that the only $3$-regular permutation graphs that are not boxcar graphs are $K_4$ and $K_{3,3}$, which have $4$ and $6$ vertices, respectively. By Proposition~\ref{boxseq}, the problem of counting boxcar graphs can be reduced to the problem of counting sequences for $m$, which is done in Proposition~\ref{enumerateseq}.
\end{proof}


\section{Conclusion}
\label{Conclusion}
We have proven that there are infinitely many $r$-regular permutation graphs for $r\geq 3$ and given a complete characterization of $3$-regular permutation graphs in terms of blow-ups of paths. While it is perhaps surprising that all $3$-regular permutation graphs are blow-ups paths, this is not the case for all $r$-regular graphs in general. In particular, Figure~\ref{Counterexample} is a counterexample with $r=4$.

\begin{figure}[h]
\includegraphics[scale=0.45]{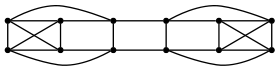}
\caption{A $4$-regular permutation graph that is not a blow-up of a path ($\pi=[5,4,7,2,1,10,3,12,11,6,9,8]$).}\label{Counterexample}
\end{figure}

The graph from Figure~\ref{Counterexample} can be constructed by blowing up a $4$-runged ladder. More specifically, if $G$ is the $4$-runged ladder whose vertices are labeled as they appear in a Hamiltonian path starting and ending on a degree $2$ vertex, then the graph from Figure~\ref{Counterexample} is $G[K_2,K_1,K_1,K_2,K_2,K_1,K_1,K_2]$. Note that $G$ is a permutation graph with realizer $[3,5,1,7,2,8,4,6]$. This observation, along with the lemma below, indicates that the permutation graph from Figure~\ref{Counterexample} is not the blow-up of a path.

\begin{lemma}\label{counter}
For each graph $G$, there is unique graph $G'$ of minimal order such that $G$ is a blow-up of $G'$.
\end{lemma}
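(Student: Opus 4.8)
The plan is to identify the minimal base graph $G'$ canonically as the quotient of $G$ by its twin classes, and to prove uniqueness by showing that \emph{every} blow-up representation refines this quotient. First I would reformulate the hypothesis combinatorially: saying that $G$ is a blow-up of a graph $H$ is the same as exhibiting a partition of $V(G)$ into blocks, one per vertex of $H$, such that each block induces a complete or an empty graph, and for any two distinct blocks the adjacency between them is all-or-nothing. Under this correspondence the order of the base graph equals the number of blocks, so a base graph of minimal order corresponds exactly to a valid blow-up partition with the fewest blocks. The goal thus becomes: among all valid blow-up partitions of $V(G)$, there is a unique one with the minimum number of blocks.

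Next I would upgrade the twin relation to an equivalence relation. Writing $u \approx v$ when $N(u) \setminus \{v\} = N(v)\setminus\{u\}$ (the paper's notion of twins, not distinguishing adjacent from nonadjacent pairs), reflexivity and symmetry are immediate, so the content is transitivity. The main obstacle is precisely this step, because a pair of adjacent twins and a pair of nonadjacent twins behave differently and a priori could chain. The key observation I would establish is that such a mixed chain is impossible: if $u \approx v$ with $u \sim v$ and $v \approx w$ with $v \not\sim w$, then from $N(v) = N(w)$ we get $u \in N(w)$, hence $w \in N(u) = (N(v)\setminus\{u\})\cup\{v\}$, forcing $w \in N(v)$ and contradicting $v \not\sim w$. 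Consequently, any chain $u \approx v \approx w$ consists entirely of adjacent twins or entirely of nonadjacent twins, and in each pure case transitivity (together with the equality $u \approx w$) follows directly from comparing open or closed neighborhoods. Thus $\approx$ is an equivalence relation, and within each class the vertices are pairwise adjacent or pairwise nonadjacent.

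With the twin classes in hand, I would check that the twin-class partition is itself a valid blow-up partition: each class induces a complete or empty graph by the previous step, each class behaves as a module (any outside vertex is adjacent to all or none of a twin class, directly from the definition of twins), and the adjacency between two classes is uniform (comparing two vertices of one class against the other class again uses the twin condition). Conversely, I would show that \emph{any} valid blow-up partition refines the twin-class partition: if $u$ and $v$ lie in a common block, then for every other vertex $w$---whether inside the block (where it sees $u,v$ uniformly because the block is complete or empty) or outside it (where between-block adjacency is all-or-nothing)---we have $w \in N(u)\iff w \in N(v)$, so $u \approx v$ and $u,v$ lie in the same twin class.

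These two facts finish the argument. Since every valid blow-up partition refines the twin-class partition, every such partition has at least as many blocks as there are twin classes; since the twin-class partition is itself valid, this minimum is attained exactly by it. Any minimal representation therefore has exactly as many blocks as twin classes, and being a refinement of the twin-class partition with the same number of parts, it must coincide with it. Hence the vertex partition underlying a minimal base graph is forced, so $G'$ is the quotient of $G$ by its twin classes and is unique up to isomorphism. I expect the transitivity of $\approx$ to be the only genuinely delicate point; the remaining verifications are routine neighborhood comparisons.
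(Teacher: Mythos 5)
Your proof is correct and takes essentially the same route as the paper's: both identify the minimal base graph as the quotient of $G$ by its twin classes, prove minimality by observing that the fibers of any blow-up must consist of twins, and derive uniqueness from the uniqueness of the twin-class partition. The only difference is rigor --- you explicitly verify transitivity of the twin relation (ruling out mixed adjacent/nonadjacent chains) and that each twin class induces a complete or empty graph, steps the paper's terser proof takes for granted.
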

\begin{proof}
Let $P = (p_1, p_2, \ldots, p_m)$ be the partition of $V(G)$ such that two vertices are in the same part if and only if they are twins. We construct an $m$-vertex graph $G'$, where distinct vertices $v_i, v_j$ of $V(G')$ are adjacent if and only if the members of $p_i$ and $p_j$ are adjacent in $G$. Then $G$ is a blow-up of $G'$, obtained by replacing each vertex $v_i$ with the vertices of $p_i$. We know that $G'$ is minimal because if $H$ is a graph such that $G$ is a blow-up of $H$, and $u_1$ and $u_2$ are vertices of $G$ that arise from the same vertex of $H$, then $u_1$ and $u_2$ must be twins. Moreover, $G'$ is unique because $P$ is unique.
\end{proof}

By taking complements of the graphs listed in from Corollary~\ref{characterization} and applying Lemma~\ref{counter}, we find other counterexamples for $r$-regularity for certain even values of $r$. Counterexamples to show that not every $r$-regular permutation graph is a blow-up of a path for odd values of $r > 4$ are not known.

\section{Acknowledgements}
\label{Acknowledgements}
We gratefully acknowledge the financial support from the following grants that made this research possible: NSF-DMS Grants 1604458, 1604773, 1604697 and 1603823, ``Rocky Mountain-Great Plains Graduate Research Workshops in Combinatorics'' (all authors), and National Security Agency Grant H98230-16-1-0018, ``The 2016 Rocky Mountain-Great Plains Graduate Research Workshop in Combinatorics'' (Amanda Lohss). Generous support was also given by the Institute for Mathematics and its Applications.

We thank the organizers of the Graduate Research Workshop in Combinatorics (GRWC) 2016 and the other participants who give insight into the problem. We also thank Irfan Alam, Mark Ellingham and Karl Mahlburg for consultation on how to enumerate this graph class.

\bibliography{references}
\bibliographystyle{abbrv}
\end{document}